\NeedsTeXFormat{LaTeX2e}
\documentclass[12pt]{amsart}

\usepackage{color}
\usepackage{amsmath}
\usepackage{amsthm}
\usepackage{amssymb}
\usepackage{graphicx}
\usepackage{enumerate}
\usepackage{amsfonts}
\usepackage{mathrsfs}
\usepackage{parskip}
\usepackage{mathdots}
\usepackage{color}

\numberwithin{equation}{section}
\theoremstyle{plain}
\newtheorem{Proposition}[equation]{Proposition}

\newtheorem*{Corollary*}{Corollary}
\newtheorem{Theorem}[equation]{Theorem}
\newtheorem*{Theorem*}{Theorem}
\newtheorem{Lemma}[equation]{Lemma}
\theoremstyle{definition}

\newtheorem{Remark}[equation]{Remark}

\def\D{\mathbb{D}}
\def\T{\mathbb{T}}

\def\K{\mathcal{K}}
\def\phi{\varphi}

\title{Compact operators on model spaces}

\author[Chalendar]{Isabelle Chalendar}
	\address{Department of Mathematics, Institute Camille Jordan, Universit\'{e} Claude-Bernard (Lyon I), 69622 Villeurbanne, France}
	\email{chalendar@math.univ-lyon1.fr}

\author[Ross]{William T. Ross}
	\address{Department of Mathematics and Computer Science, University of Richmond, Richmond, VA 23173, USA}
	\email{wross@richmond.edu}

\keywords{Hardy spaces, inner functions, model spaces, compact operators, Toeplitz operators}

\subjclass[2010]{30J05, 30H10, 46E22}

\begin{document}

\begin{abstract}
We give a characterization of the compact operators on a model space in terms of asymptotic Toeplitz operators. 
\end{abstract}

\maketitle

\section{Introduction}

If $H^2$ denotes the classical Hardy space of the open unit disk $\D$ \cite{Duren, Garnett}, a theorem of Brown and Halmos \cite{BH} says that a bounded linear operator $T$ on $H^2$ is a Toeplitz operator if and only if 
$$S^{*} T S = T,$$
where $S f = z f$ is the well-known unilateral shift on $H^2$. By a {\em Toeplitz operator} \cite{Bottcher}, we mean, for a given symbol $\varphi \in L^{\infty}(\T, m)$ ($\T$ is the unit circle and $m$ is normalized Lebesgue measure on $\T$), the operator 
$$T_{\phi}: H^2 \to H^2, \quad T_{\phi} f = P(\phi f),$$ where $P$ is the orthogonal projection of $L^2$ onto $H^2$.

This notion of ``Toeplitzness'' was extended in various ways. Barria and Halmos \cite{MR667164} examined the so-called {\em asymptotically Toeplitz operators} operators $T$ on $H^2$ for which the sequence of operators 
$$\{S^{* n} T S^{n}\}_{n \geqslant 1}$$ converges strongly. This class certainly includes the Toeplitz operators but also includes other operators such as those in the Hankel algebra. Feintuch \cite{MR1038338} discovered that one need not restrict to strong convergence of $\{S^{* n} T S^{n}\}_{n \geqslant 1}$ and worthwhile classes of operators arise from  the weak and uniform (or norm) limits of this sequence. Indeed, an operator $T$ on $H^2$ is uniformly asymptotically Toeplitz, i.e., $S^{*  n} T S^{n}$ converges in operator norm, if and only if 
\begin{equation}\label{sndjfkeruy}
T = T_1 + K,
\end{equation}
where $T_1$ is a Toeplitz operator, i.e., $S^{*} T_{1} S = T_1$, and $K$ is a compact operator on $H^2$.  Nazarov and Shapiro \cite{MR2297770} examined other associated notions of ``Toeplitzness'' with regards to certain composition operators on $H^2$. 

In this paper we explore a model space setting for this ``Toeplitzness'' discussion. For an inner function $\Theta$ on $\D$ (i.e., a bounded analytic function on $\D$ whose radial boundary values are unimodular almost everywhere on $\T$), one can define the {\em model space}  \cite{MSGMR, Niktr}
$$\K_{\Theta} = H^2 \ominus \Theta H^2.$$ Beurling's theorem \cite{Duren} says that these spaces are the generic invariant subspaces for the backward shift operator 
$$S^{*} f = \frac{f - f(0)}{z}$$ on $H^2$. By model theory for contractions \cite{Niktr}, certain types of Hilbert space contractions are unitarily equivalent to compressed shifts 
$$S_{\Theta} = P_{\Theta} S|_{\K_{\Theta}},$$
where $P_{\Theta}$ is the orthogonal projection of $L^2$ onto $\K_{\Theta}$.

In this model spaces setting, we examine, for a bounded operator $A$ on $\K_{\Theta}$, the sequence 
$$\{S_{\Theta}^{* n} A S_{\Theta}^{n}\}_{n \geqslant 1}.$$ Here we have a similar result as before (see Lemma \ref{77235bbx} below) in that $S_{\Theta}^{* n} A S_{\Theta}^{n}$ converges in operator norm if and only if  
$$A = A_{1} + K,$$
where $K$ is a compact operator on $\K_{\Theta}$ and $A_1$ satisfies $S_{\Theta}^{*} A_1 S_{\Theta} = A_1$. In the analogous $H^2$ setting, the operator $T_1$ from \eqref{sndjfkeruy} is a Toeplitz operator. In the model space setting, the corresponding operator $A_1$ is severely restricted. Indeed,
$$A_1 \equiv 0.$$ Thus, as the main theorem of this paper, we have the following characterization of the compact operators on $\K_{\Theta}$. 

\begin{Theorem}\label{MT}
For an inner function $\Theta$ and a bounded linear operator $A$ on $\K_{\Theta}$, the following are equivalent: 
\begin{enumerate}
\item[(i)] The sequence $S_{\Theta}^{* n} A S_{\Theta}^{n}$ converges in operator norm; 
\item[(ii)] $S_{\Theta}^{*n} A S_{\Theta}^{n} \to 0$ in operator norm;
\item[(iii)] $A$ is a compact operator. 
\end{enumerate}
\end{Theorem}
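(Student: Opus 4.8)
The plan is to close the cycle (ii)$\Rightarrow$(i)$\Rightarrow$(iii)$\Rightarrow$(ii), the implication (ii)$\Rightarrow$(i) being trivial. The whole argument rests on one structural fact about the compressed shift, which I would record first: $S_\Theta$ is a contraction of class $C_{00}$, i.e.\ $S_\Theta^{*n}\to 0$ and $S_\Theta^{n}\to 0$ in the strong operator topology. The first half is immediate since $S_\Theta^{*}=S^{*}|_{\K_\Theta}$ and $S^{*n}\to 0$ strongly on $H^2$. For the second half, for $f\in\K_\Theta$ one writes $S_\Theta^{n}f=P_\Theta(z^{n}f)=z^{n}f-\Theta\,P(\overline{\Theta}z^{n}f)$, so that $\|S_\Theta^{n}f\|^{2}=\|f\|^{2}-\|P(z^{n}\overline{\Theta}f)\|^{2}$; since $\|P(z^{n}g)\|^{2}=\sum_{k\geqslant -n}|\widehat{g}(k)|^{2}$ increases to $\|g\|^{2}$ as $n\to\infty$, applying this to $g=\overline{\Theta}f$ (for which $\|g\|=\|f\|$) gives $\|S_\Theta^{n}f\|\to 0$. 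This $C_{00}$ property of model operators is standard and could instead be quoted.

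For (i)$\Rightarrow$(iii): assuming $S_\Theta^{*n}AS_\Theta^{n}$ converges in operator norm, Lemma~\ref{77235bbx} furnishes a decomposition $A=A_1+K$ with $K$ compact and $S_\Theta^{*}A_1S_\Theta=A_1$. Iterating the last identity gives $S_\Theta^{*n}A_1S_\Theta^{n}=A_1$ for all $n$, hence for every $f,g\in\K_\Theta$ we have $\langle A_1f,g\rangle=\langle A_1S_\Theta^{n}f,\,S_\Theta^{n}g\rangle$, and the right side is at most $\|A_1\|\,\|S_\Theta^{n}f\|\,\|S_\Theta^{n}g\|\to 0$. Therefore $A_1=0$ and $A=K$ is compact; this is exactly where the ``Toeplitz part'' of the $H^2$ statement \eqref{sndjfkeruy} collapses in the model setting.

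For (iii)$\Rightarrow$(ii): a compact $A$ is a norm limit of finite-rank operators, so from $S_\Theta^{*n}\to 0$ strongly one obtains $\|AS_\Theta^{n}\|\to 0$, and therefore $\|S_\Theta^{*n}AS_\Theta^{n}\|\leqslant\|AS_\Theta^{n}\|\to 0$; in particular the sequence converges, and to $0$. Together with (ii)$\Rightarrow$(i) this closes the cycle and yields all three equivalences (including the sharper conclusion (ii)).

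If one prefers a route not relying on Lemma~\ref{77235bbx}, I would prove (ii)$\Rightarrow$(iii) directly by exploiting that the defect operator $I-S_\Theta S_\Theta^{*}$ has rank one: one computes $S_\Theta S_\Theta^{*}f=P_\Theta(f-f(0))=f-\langle f,k_0\rangle k_0$, where $k_0=P_\Theta 1=1-\overline{\Theta(0)}\Theta$ is the reproducing kernel of $\K_\Theta$ at the origin. Writing $E=S_\Theta S_\Theta^{*}=I-R$ with $R=k_0\otimes k_0$, for any bounded $B$ one has $B-EBE=RB+EBR$ (rank at most $2$) while $EBE=S_\Theta(S_\Theta^{*}BS_\Theta)S_\Theta^{*}$; iterating on $A$ yields, for each $N$, $A=S_\Theta^{N}\big(S_\Theta^{*N}AS_\Theta^{N}\big)S_\Theta^{*N}+G_N$ with $G_N$ of finite rank, and since $\|S_\Theta^{*N}AS_\Theta^{N}\|\to 0$ by (ii) it follows that $G_N\to A$ in operator norm, so $A$ is compact. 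The only steps needing genuine care are these two structural facts — the $C_{00}$ property (which is what forces $A_1=0$) and, for the self-contained route, the rank-one defect (which is what collapses the telescoping to finite rank); I expect the first to be the real crux, since it is precisely the phenomenon that makes the model-space statement strictly stronger than its $H^2$ analogue.
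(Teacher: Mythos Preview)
Your argument is correct, and for the implications (iii)$\Rightarrow$(ii) and (i)$\Rightarrow$(iii) via Lemma~\ref{77235bbx} it tracks the paper closely; the substantive divergence is in how you kill the ``Toeplitz part'' $A_1$. The paper proves $S_\Theta^{*}A_1S_\Theta=A_1\Rightarrow A_1=0$ by a reproducing-kernel computation (Proposition~\ref{lem:zero}): it evaluates $(A_1\widetilde{k}_\lambda)(z)$ using the Sarason identities for $S_\Theta k_\lambda$ and $S_\Theta\widetilde{k}_\lambda$, forces $(A_1\widetilde{k}_\lambda)(0)=0$ by analyticity, and then concludes $A_1\widetilde{k}_\lambda\equiv 0$ for all $\lambda$. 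Your route is shorter and more conceptual: from $S_\Theta^{n}\to 0$ strongly you get $\langle A_1f,g\rangle=\langle A_1S_\Theta^{n}f,S_\Theta^{n}g\rangle\to 0$ directly. The paper does record $S_\Theta^{n}\to 0$ strongly (Remark~\ref{99e776ewr}, via the unitary equivalence $S_\Theta\cong S_\Psi^{*}$), but only uses it for Proposition~\ref{secondMT}, not for Proposition~\ref{lem:zero}; your direct Fourier computation of $\|S_\Theta^{n}f\|^{2}=\|f\|^{2}-\|P(z^{n}\overline{\Theta}f)\|^{2}$ is a nice self-contained alternative. What the paper's approach buys is an explicit algebraic identity on kernels that may be of independent interest; what yours buys is that the whole theorem becomes an immediate corollary of the $C_{00}$ property, and your optional (ii)$\Rightarrow$(iii) telescoping via the rank-one defect $I-S_\Theta S_\Theta^{*}=k_0\otimes k_0$ recovers the content of Lemma~\ref{77235bbx} without needing to isolate a limit operator first.
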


One can also explore the convergence of the sequence $S_{\Theta}^{* n} A S_{\Theta}^{n}$ in other topologies, such as the strong/weak operator topologies. Surprisingly what happens is entirely different from what happens in $H^2$.  

\begin{Proposition}\label{secondMT}
	For any inner function $\Theta$ and any bounded linear operator $A$ on $\K_{\Theta}$, the sequence $S_{\Theta}^{*n} A S_{\Theta}^{n}$ converges to zero strongly.
\end{Proposition}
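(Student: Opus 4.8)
The plan is to reduce the proposition to the single fact that the compressed shift is of class $C_{0\cdot}$, i.e.\ $\|S_\Theta^{n} g\| \to 0$ for every $g \in \K_\Theta$. Granting this, the conclusion is immediate: $S_\Theta$ is a contraction, so $\|S_\Theta^{*n}\| \le 1$, and for a fixed $f \in \K_\Theta$,
\[
\|S_\Theta^{*n} A S_\Theta^{n} f\| \le \|S_\Theta^{*n}\|\,\|A\|\,\|S_\Theta^{n} f\| \le \|A\|\,\|S_\Theta^{n} f\| \longrightarrow 0 .
\]
Note that only the convergence $S_\Theta^{n} \to 0$ strongly is used: the vectors $A S_\Theta^{n} f$ already tend to $0$ in norm while $\{S_\Theta^{*n}\}$ stays uniformly bounded, so no delicate interplay between the two sequences of powers is needed. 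This is exactly where the behaviour diverges from $H^2$: there the shift $S$ is an isometry, $\|S^{n} f\| = \|f\|$ does not decay, and $S^{*n} T S^{n}$ retains the Toeplitz information of $T$.

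So the real content is $\|S_\Theta^{n} f\| \to 0$ for $f \in \K_\Theta$. First I would record the identity $S_\Theta^{n} = P_\Theta S^{n}|_{\K_\Theta}$, which is standard and follows by an easy induction: if $z^{n} f = h_1 + \Theta h_2$ with $h_1 = P_\Theta(z^{n} f) \in \K_\Theta$ and $h_2 \in H^2$, then $z^{n+1} f = z h_1 + \Theta(z h_2)$, and $P_\Theta$ annihilates the $\Theta H^2$ summand. Combining this with the elementary formula $P_\Theta h = h - \Theta P(\overline{\Theta} h)$ valid for $h \in H^2$, the vector $z^{n} f$ decomposes in $H^2 = \K_\Theta \oplus \Theta H^2$ as $z^{n} f = S_\Theta^{n} f + \Theta P(\overline{\Theta} z^{n} f)$, a sum of orthogonal vectors, so Pythagoras gives
\[
\|S_\Theta^{n} f\|^{2} = \|z^{n} f\|^{2} - \|P(\overline{\Theta} z^{n} f)\|^{2} = \|f\|^{2} - \|P(\overline{\Theta} z^{n} f)\|^{2}.
\]
Thus the claim is equivalent to $\|P(\overline{\Theta} z^{n} f)\| \to \|f\|$ as $n \to \infty$.

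To establish this last limit I would use the conjugate-analytic description of the model space: membership $f \in \K_\Theta$ forces $\overline{\Theta} f = \overline{\psi}$ for some $\psi \in z H^{2}$ with $\|\psi\| = \|f\|$. Writing $\psi = \sum_{k \ge 1} \hat{\psi}(k) z^{k}$, one gets $\overline{\Theta} z^{n} f = z^{n}\overline{\psi} = \sum_{k \ge 1} \overline{\hat{\psi}(k)}\, z^{n-k}$, whose analytic part is $\sum_{k=1}^{n} \overline{\hat{\psi}(k)}\, z^{n-k}$; hence $\|P(\overline{\Theta} z^{n} f)\|^{2} = \sum_{k=1}^{n} |\hat{\psi}(k)|^{2}$, which increases to $\|\psi\|^{2} = \|f\|^{2}$. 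Plugging back, $\|S_\Theta^{n} f\|^{2} \to 0$, which completes the reduction. (Alternatively, one may simply quote from model theory \cite{Niktr} that $S_\Theta$ is of class $C_{00}$ for every inner $\Theta$ and skip the computation.)

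The only genuine obstacle is the limit $\|P(\overline{\Theta} z^{n} f)\| \to \|f\|$; everything else is bookkeeping. The subtle point it addresses is that $z^{n} f$ converges to $0$ only \emph{weakly} in $H^{2}$ (its norm stays constant), so one cannot deduce $\|P_\Theta(z^{n} f)\| \to 0$ from weak-to-norm continuity of $P_\Theta$ — which in fact fails, $P_\Theta$ being non-compact whenever $\dim \K_\Theta = \infty$. The Fourier-coefficient computation is precisely what circumvents this.
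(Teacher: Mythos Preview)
Your proof is correct and follows the same overall strategy as the paper: bound $\|S_\Theta^{*n} A S_\Theta^{n} f\| \leqslant \|A\|\,\|S_\Theta^{n} f\|$ and invoke the fact that $S_\Theta^{n} \to 0$ strongly. The paper obtains the same inequality by pairing with a test vector $g$ and taking a supremum, but this is cosmetic.

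The one substantive difference lies in how the auxiliary fact $\|S_\Theta^{n} f\| \to 0$ is justified. The paper appeals to the unitary equivalence $S_\Theta \cong S_\Psi^{*}$ with $\Psi(z) = \overline{\Theta(\overline z)}$, reducing the claim to the already-established strong convergence $S^{*n} \to 0$ on $H^2$. You instead compute directly: from $\|S_\Theta^{n} f\|^{2} = \|f\|^{2} - \|P(\overline{\Theta} z^{n} f)\|^{2}$ and the description $\overline{\Theta} f = \overline{\psi}$ with $\psi \in zH^{2}$, the Riesz projection picks up the partial sum $\sum_{k=1}^{n}|\hat\psi(k)|^{2}$, which exhausts $\|f\|^{2}$. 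Your argument is self-contained and makes explicit where the model-space hypothesis enters (the conjugate-analytic characterization of $\K_\Theta$), while the paper's route is shorter but relies on an external structural fact about $S_\Theta$. Both are standard; either would be acceptable.
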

In other words, the convergence of $S_{\Theta}^{* n} A S_{\Theta}^{n}$ in the strong or weak topology is always true (and to the same operator) and provides no information about $A$. 

%

\section{Characterization of the compact operators}

The following lemma proves the implication $(iii) \implies (ii)$ of Theorem \ref{MT}. 

\begin{Lemma}\label{43092367}
If $K$ is a compact operator on $\K_{\Theta}$ then 
$$\lim_{n \to \infty} \|S_{\Theta}^{* n} K S_{\Theta}^{n}\| = 0.$$
\end{Lemma}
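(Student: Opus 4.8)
The plan is to exploit two elementary facts: that $\{S_{\Theta}^{*n}\}_{n\geqslant 1}$ is a sequence of contractions tending to $0$ in the strong operator topology, and that composing a norm‑bounded, strongly null sequence of operators with a fixed compact operator forces norm convergence to $0$.

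First I would record the identity $S_{\Theta}^{*} = S^{*}|_{\K_{\Theta}}$. Since $\K_{\Theta}$ is a backward‑shift invariant subspace of $H^2$ (Beurling's theorem), we have $S^{*}\K_{\Theta}\subseteq\K_{\Theta}$, and therefore $S_{\Theta}^{*}=(P_{\Theta}S|_{\K_{\Theta}})^{*}=P_{\Theta}S^{*}|_{\K_{\Theta}}=S^{*}|_{\K_{\Theta}}$. Iterating, $S_{\Theta}^{*n}=S^{*n}|_{\K_{\Theta}}$ for every $n$. Consequently, for each $f\in\K_{\Theta}$, writing $f=\sum_{k\geqslant 0}\hat f(k)z^{k}$, we get $\|S_{\Theta}^{*n}f\|^{2}=\|S^{*n}f\|^{2}=\sum_{k\geqslant n}|\hat f(k)|^{2}\to 0$ as $n\to\infty$. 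Thus $S_{\Theta}^{*n}\to 0$ strongly, while $\|S_{\Theta}^{*n}\|\leqslant 1$ for all $n$.

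Next I would prove the auxiliary statement: if $\{T_{n}\}$ is a uniformly bounded sequence of operators on a Hilbert space with $T_{n}\to 0$ strongly and $K$ is compact, then $\|T_{n}K\|\to 0$. This is the standard argument: if it failed, there would be $\varepsilon>0$ and unit vectors $x_{n}$ with $\|T_{n}Kx_{n}\|\geqslant\varepsilon$; compactness of $K$ yields a subsequence with $Kx_{n_{j}}\to y$, and then $\|T_{n_{j}}Kx_{n_{j}}\|\leqslant\sup_{m}\|T_{m}\|\cdot\|Kx_{n_{j}}-y\|+\|T_{n_{j}}y\|\to 0$, a contradiction. Applying this with $T_{n}=S_{\Theta}^{*n}$ gives $\|S_{\Theta}^{*n}K\|\to 0$.

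Finally, since $\|S_{\Theta}\|\leqslant 1$ implies $\|S_{\Theta}^{n}\|\leqslant 1$, I would conclude
$$\|S_{\Theta}^{*n}KS_{\Theta}^{n}\|\leqslant\|S_{\Theta}^{*n}K\|\,\|S_{\Theta}^{n}\|\leqslant\|S_{\Theta}^{*n}K\|\to 0.$$
I do not anticipate a serious obstacle here; the only point requiring a moment of care is the reduction $S_{\Theta}^{*n}=S^{*n}|_{\K_{\Theta}}$, which makes the strong convergence $S_{\Theta}^{*n}\to 0$ transparent. An alternative to the compactness argument would be to approximate $K$ in norm by finite‑rank operators $K_{\varepsilon}$ and note that $\|S_{\Theta}^{*n}K_{\varepsilon}S_{\Theta}^{n}\|\to 0$ directly (a finite sum of terms of the form $\|S_{\Theta}^{*n}u\|\,\|S_{\Theta}^{*n}v\|$), but the compactness argument is cleaner and is the one I would present.
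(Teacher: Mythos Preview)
Your proof is correct and follows essentially the same approach as the paper: both establish $S_{\Theta}^{*n}=S^{*n}|_{\K_{\Theta}}\to 0$ strongly and then use compactness of $K$ to upgrade this to $\|S_{\Theta}^{*n}K\|\to 0$, bounding the remaining factor by $\|S_{\Theta}^{n}\|\leqslant 1$. The only cosmetic difference is that the paper carries out the compactness step via a finite-subcover argument on $\overline{K(\mathcal{B}_{\Theta})}$, whereas you use the equivalent sequential-compactness/contradiction argument.
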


\begin{proof}
Let $\mathcal{B}_{\Theta} = \{f \in \K_{\Theta}: \|f\| \leqslant 1\}$ denote the closed unit ball in $\K_{\Theta}$. First observe that 
$$\|S_{\Theta}^{n}\| \leqslant \|S_{\Theta}\|^{n} \leqslant \|P_{\Theta} S|_{\K_{\Theta}}\|^n \leqslant \|S\|^n = 1.$$ From here we see that 
\begin{align}
\|S_{\Theta}^{* n} K S_{\Theta}^{n}\| & = \sup_{f \in \mathcal{B}_{\Theta}} \|S_{\Theta}^{* n} K S_{\Theta}^{n} f\| \nonumber\\
& \leqslant \sup_{g \in \mathcal{B}_{\Theta}} \|S_{\Theta}^{* n} K g\| \nonumber\\
& \leqslant \sup_{h \in \overline{K(\mathcal{B}_{\Theta}})} \|S_{\Theta}^{* n} h\|. \label{73652rituy4rejkg}
\end{align}

Second, note that $S^{* n} \to 0$ strongly. Indeed, if $f = \sum_{k \geqslant 0} a_k z^k \in H^2$, then 
$$
\|S^{* n} f\|^2 = \sum_{k \geqslant n + 1} |a_k|^2 \to 0 \quad n \to \infty.
$$ Thus since $S_{\Theta}^{* n} = S^{* n}|_{\K_{\Theta}}$ (since $\K_{\Theta}$ is $S^{*}$-invariant), we see that 
\begin{equation}\label{sdfsdf4r4rewf999}
S_{\Theta}^{* n} \to 0 \; \; \mbox{strongly}.
\end{equation}

Let $\epsilon > 0$ be given and let $h \in \overline{K(\mathcal{B}_{\Theta})}$. Since $S_{\Theta}^{* n} \to 0$ strongly, there exists an $n_{h, \epsilon}$ such that $\|S_{\Theta}^{* n} h\| < \epsilon/2$ for all $n > n_{h, \epsilon}$. The continuity of the operator $S^{* n_{h, \epsilon}}$ implies that there exists a $r_{h, \epsilon}$ such that for all $q$ belonging to 
$$B(h, r_{h, \epsilon}) = \{q \in \K_{\Theta}: \|q - h\| < r_{h, \epsilon}\}$$ we have $\|S^{* n_{h, \epsilon}} q\| < \epsilon$. 

Again using the fact that $\|S_{\Theta}^{*}\| \leqslant 1$, we see that for all $q \in B(h, r_{h, \epsilon})$ and all $n > n_{h, \epsilon}$ we have 
\begin{equation}\label{oweijfhd}
\|S_{\Theta}^{* n} q\| = \|S_{\Theta}^{* (n - n_{h, \epsilon}) }S_{\Theta}^{* n_{h, \epsilon}} q\| \leqslant \|S_{\Theta}^{* n_{h, \epsilon}} q\| < \epsilon.
\end{equation}
Moreover, we have 
$$
\overline{K(\mathcal{B}_{\Theta})} \subset \bigcup_{h \in \overline{K(\mathcal{B}_{\Theta})}} B(h, r_{h, \epsilon}).
$$
The compactness of $\overline{K(\mathcal{B}_{\Theta})}$ implies that there exists $h_{1}, \ldots, h_{N}$ $(N = N_{\epsilon}$) belonging to $\overline{K(\mathcal{B}_{\Theta})}$ such that 
$$\overline{K(\mathcal{B}_{\Theta})} \subset \bigcup_{k = 1}^{N} B(h_k, r_{h_k, \epsilon}).$$
For all $n > \max\{n_{h_1, \epsilon}, \ldots, n_{h_{N}, \epsilon}\}$ we use \eqref{oweijfhd} along with \eqref{73652rituy4rejkg} to see that 
$$\|S_{\Theta}^{* n} h\| < \epsilon \quad \forall h \in \overline{K(\mathcal{B}_{\Theta})}.$$
This proves the lemma. 
\end{proof}

\begin{Remark}\label{99e776ewr}
Important to the proof above was the fact that $S_{\Theta}^{* n} \to 0$ strongly (see \eqref{sdfsdf4r4rewf999}). One can show that $S_{\Theta}$ is unitarily equivalent to $S_{\Psi}^{*}$, where $\Psi$ is the inner function defined by $\Psi(z) = \overline{\Theta(\overline{z})}$ \cite[p.~303]{MSGMR}. From here we see that $S_{\Theta}^{n} \to 0$ strongly. This detail will be important at the end of the paper in the proof of Theorem~\ref{secondMT}. 
\end{Remark}

\begin{Lemma}\label{77235bbx}
Suppose $T$ is a bounded operator on $\K_{\Theta}$ such that 
$S_{\Theta}^{* n} T S_{\Theta}^{n}$ converges in norm. 
Then $T = T_{1} + K$, where $K$ is a compact operator on $\K_{\Theta}$ and $T_1$ is a bounded operator on $\K_{\Theta}$ satisfying $S_{\Theta}^{*} T_{1} S_{\Theta} = T_{1}$.
\end{Lemma}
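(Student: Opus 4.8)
The plan is to let $T_{1}$ be the operator-norm limit of the sequence $S_{\Theta}^{*n}TS_{\Theta}^{n}$ (which exists by hypothesis, and is automatically bounded, being a norm limit of operators of norm at most $\|T\|$) and then to establish two things: that $S_{\Theta}^{*}T_{1}S_{\Theta}=T_{1}$, and that the remainder $K:=T-T_{1}$ is compact.

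The first point is soft. As observed in the proof of Lemma \ref{43092367} we have $\|S_{\Theta}\|\leqslant 1$, so the map $A\mapsto S_{\Theta}^{*}AS_{\Theta}$ on bounded operators of $\K_{\Theta}$ is a contraction, hence norm continuous. Applying it to the convergent sequence $S_{\Theta}^{*n}TS_{\Theta}^{n}$ gives $S_{\Theta}^{*(n+1)}TS_{\Theta}^{n+1}\to S_{\Theta}^{*}T_{1}S_{\Theta}$; since this sequence also converges to $T_{1}$, uniqueness of limits yields $S_{\Theta}^{*}T_{1}S_{\Theta}=T_{1}$, and iterating, $S_{\Theta}^{*n}T_{1}S_{\Theta}^{n}=T_{1}$ for every $n\geqslant 0$. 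Consequently $S_{\Theta}^{*n}KS_{\Theta}^{n}=S_{\Theta}^{*n}TS_{\Theta}^{n}-T_{1}\to 0$ in operator norm, so it remains only to prove that a bounded operator $K$ on $\K_{\Theta}$ with $\|S_{\Theta}^{*n}KS_{\Theta}^{n}\|\to 0$ is compact.

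For this I would use the following structural fact. Because $\K_{\Theta}$ is $S^{*}$-invariant, $S_{\Theta}^{*n}=S^{*n}|_{\K_{\Theta}}$, and taking adjoints (of operators on $\K_{\Theta}$) gives $S_{\Theta}^{n}=P_{\Theta}S^{n}|_{\K_{\Theta}}$. Hence, for $f\in\K_{\Theta}$,
$$(I-S_{\Theta}^{n}S_{\Theta}^{*n})f=f-P_{\Theta}S^{n}S^{*n}f=P_{\Theta}Q_{n}f,$$
where $Q_{n}$ is the orthogonal projection of $H^{2}$ onto the polynomials of degree less than $n$; thus $G_{n}:=I-S_{\Theta}^{n}S_{\Theta}^{*n}$ has finite rank (at most $n$) on $\K_{\Theta}$. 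Now fix $\epsilon>0$ and choose $N$ with $\|S_{\Theta}^{*N}KS_{\Theta}^{N}\|<\epsilon$. Writing $K=(S_{\Theta}^{N}S_{\Theta}^{*N}+G_{N})K(S_{\Theta}^{N}S_{\Theta}^{*N}+G_{N})$ and collecting terms,
$$K=S_{\Theta}^{N}\bigl(S_{\Theta}^{*N}KS_{\Theta}^{N}\bigr)S_{\Theta}^{*N}+G_{N}K+S_{\Theta}^{N}S_{\Theta}^{*N}KG_{N}.$$
The last two summands are finite rank, and the first has norm at most $\|S_{\Theta}^{N}\|\,\|S_{\Theta}^{*N}KS_{\Theta}^{N}\|\,\|S_{\Theta}^{*N}\|<\epsilon$ since $\|S_{\Theta}\|\leqslant 1$. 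Thus $K$ lies within $\epsilon$, in operator norm, of a finite rank operator for every $\epsilon>0$, so $K$ is a norm limit of finite rank operators and therefore compact.

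The only genuine obstacle is the finite-rank assertion about $I-S_{\Theta}^{n}S_{\Theta}^{*n}$: this is the model-space substitute for the isometric identity $S^{*n}S^{n}=I$ that drives the analogous $H^{2}$ argument, and it is exactly what upgrades the elementary sandwiching in the previous paragraph from producing a merely bounded approximant to producing a finite rank one. Everything else is routine bookkeeping.
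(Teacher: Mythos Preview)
Your proof is correct and follows essentially the same route as the paper: define $T_{1}$ as the norm limit, verify $S_{\Theta}^{*}T_{1}S_{\Theta}=T_{1}$ by continuity, and then sandwich $K=T-T_{1}$ between $I=S_{\Theta}^{n}S_{\Theta}^{*n}+(I-S_{\Theta}^{n}S_{\Theta}^{*n})$ to approximate it in norm by finite rank operators. The only cosmetic difference is that the paper obtains the finite rank of $I-S_{\Theta}^{n}S_{\Theta}^{*n}$ by iterating Sarason's identity $S_{\Theta}S_{\Theta}^{*}=I-k_{0}\otimes k_{0}$, whereas your identification $I-S_{\Theta}^{n}S_{\Theta}^{*n}=P_{\Theta}(I-S^{n}S^{*n})|_{\K_{\Theta}}$ gives the same conclusion more directly.
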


\begin{proof}
Let $A$ be a bounded operator on $\K_{\Theta}$ such that  $$\|S_{\Theta}^{* n} T S_{\Theta}^{n} - A\| \to 0.$$ Then 
\begin{align*}
\|S_{\Theta}^{* (n + 1)} T S_{\Theta}^{n + 1} - S_{\Theta}^{*} A S_{\Theta}\| & = \|S_{\Theta}^{*}(S_{\Theta}^{* n} T S_{\Theta}^{n} - A) S_{\Theta}\|\\
& \leqslant \|S_{\Theta}^{* n} T S_{\Theta}^{n} - A\| \to 0.
\end{align*}
This implies that 
\begin{equation}\label{32897kjdfs}
S_{\Theta}^{*} A S_{\Theta} = A.
\end{equation} From here it follows that 
\begin{equation}\label{sdjkiufer84}
S_{\Theta}^{* n} T S_{\Theta}^{n} - A = S_{\Theta}^{* n} (T - A) S_{\Theta}^{n}, \quad  n \geqslant 0.
\end{equation}

Define 
$$P_{n} := S_{\Theta}^{n} S_{\Theta}^{* n} \; \; \mbox{and} \; \;  Q_{n} := I - P_{n} = I - S_{\Theta}^{n} S_{\Theta}^{* n}$$ and observe that 
\begin{align}
P_{n} (T - A) P_{n} & = (T - A) \nonumber\\
& -Q_{n} (T - A) + Q_{n} (T - A) Q_{n} - (T - A) Q_n. \label{34q9857634895}
\end{align}
Furthermore by \eqref{sdjkiufer84} we have 
\begin{align*}
\|P_{n} (T - A) P_{n}\| & = \|S_{\Theta}^{n} S_{\Theta}^{*n} (T - A) S_{\Theta}^{n} S_{\Theta}^{* n}\|\\
& \leqslant \|S_{\Theta}^{* n} (T - A) S_{\Theta}^{n}\|\\
& = \|S_{\Theta}^{* n} T S_{\Theta}^{n} - A\| \to 0.
\end{align*}

If
$$k_{\lambda}(z) = \frac{1 - \overline{\Theta(\lambda)} \Theta(z)}{1 - \overline{\lambda} z}, \quad \lambda, z \in \D,$$
is the reproducing kernel for $\K_{\Theta}$, then \cite[p.~497]{Sarason} gives us the well-known operator identity 
$$S_{\Theta}S_{\Theta}^{*} = I - k_{0} \otimes k_{0}.$$ Iterating the above $n$ times we get  
$$S_{\Theta}^{n} S_{\Theta}^{*n} = I - \sum_{j = 0}^{n - 1} S_{\Theta}^{j} k_{0} \otimes S_{\Theta}^{* j} k_0.$$
In other words, 
$$Q_n = I - P_{n} = \sum_{j = 0}^{n - 1} S_{\Theta}^{j} k_{0} \otimes S_{\Theta}^{* j} k_0$$ is a finite rank operator.

By \eqref{34q9857634895} this means that 
$$F_{n} := -Q_{n} (T - A) + Q_{n} (T - A) Q_{n} - (T - A) Q_n$$ is a finite rank operator which converges in norm to $A - T$. Hence $A - T$ a compact operator and, by \eqref{32897kjdfs}, $A$ satisfies $S_{\Theta}^{*} A S_{\Theta} = A$.
\end{proof}

So far we know from Lemma \ref{43092367} that every compact operator $K$ on $\K_{\Theta}$ satisfies 
$$\lim_{n \to \infty} \|S_{\Theta}^{* n} K S_{\Theta}^{n}\| = 0.$$
Furthermore, from Lemma \ref{77235bbx} we see that an operator $A$ for which $S_{\Theta}^{* n} A S_{\Theta}^{n}$ converges in operator norm can be written as 
$A = A_{1} + K$ where $K$ is compact and $A_1$ satisfies $S_{\Theta}^{*} A_1 S_{\Theta} = A_1$. To complete the proof of Theorem \ref{MT}, we need to show that $$S_{\Theta}^{*} A S_{\Theta} = A \iff A \equiv 0.$$ This is done with the following result. 

\begin{Proposition}\label{lem:zero}
Suppose $A$ is a bounded operator on $\K_{\Theta}$. Then $S_{\Theta}^{*} A S_{\Theta} = A$ if and only if $A \equiv 0$. 
\end{Proposition}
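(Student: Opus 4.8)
The \emph{if} direction is trivial, so the plan is to show that the operator identity $S_{\Theta}^{*} A S_{\Theta} = A$ forces $A \equiv 0$. The first step is to iterate the hypothesis: applying the map $X \mapsto S_{\Theta}^{*} X S_{\Theta}$ repeatedly gives
$$S_{\Theta}^{* n} A S_{\Theta}^{n} = A, \quad n \geqslant 1,$$
so it suffices to show that the left-hand side becomes arbitrarily small in a suitably weak sense as $n \to \infty$.

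The key input is that $S_{\Theta}^{n} \to 0$ strongly on $\K_{\Theta}$. This has already been recorded in Remark~\ref{99e776ewr}: since $S_{\Theta}$ is unitarily equivalent to $S_{\Psi}^{*}$ with $\Psi(z) = \overline{\Theta(\overline{z})}$, and $S_{\Psi}^{* n} \to 0$ strongly by the computation in \eqref{sdfsdf4r4rewf999}, we get $S_{\Theta}^{n} \to 0$ strongly (equivalently, $S_{\Theta}$ is a $C_{0\cdot}$ contraction). Granting this, fix $f \in \K_{\Theta}$; using $\|S_{\Theta}^{*}\| \leqslant 1$ we estimate
$$\|A f\| = \|S_{\Theta}^{* n} A S_{\Theta}^{n} f\| \leqslant \|S_{\Theta}^{* n}\| \, \|A\| \, \|S_{\Theta}^{n} f\| \leqslant \|A\| \, \|S_{\Theta}^{n} f\|,$$
and the right-hand side tends to $0$ as $n \to \infty$. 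Hence $A f = 0$ for every $f \in \K_{\Theta}$, i.e.\ $A \equiv 0$.

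I expect no real obstacle: once the iterated identity is in hand, the only nontrivial ingredient is the strong convergence $S_{\Theta}^{n} \to 0$, which has already been supplied. The same estimate, applied to an arbitrary bounded $A$, shows $\|S_{\Theta}^{* n} A S_{\Theta}^{n} f\| \leqslant \|A\|\,\|S_{\Theta}^{n} f\| \to 0$ for every $f$, which is precisely Proposition~\ref{secondMT}; so this single computation does double duty. One could equally run the argument in the weak form $\langle A f, g\rangle = \langle A S_{\Theta}^{n} f, S_{\Theta}^{n} g\rangle \to 0$, but the norm estimate above is the cleanest route.
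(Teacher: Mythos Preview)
Your argument is correct, but it takes a genuinely different route from the paper's. The paper proves Proposition~\ref{lem:zero} by a direct reproducing-kernel computation: using the Sarason identities for $S_{\Theta}\widetilde{k}_{\lambda}$ and $S_{\Theta}k_{\lambda}$, it expands $\langle S_{\Theta}^{*}AS_{\Theta}\widetilde{k}_{\lambda}, k_{z}\rangle$, derives a functional equation for $(A\widetilde{k}_{\lambda})(z)$, and extracts from it that $A\widetilde{k}_{\lambda}=0$ for all $\lambda$, whence $A=0$ by density. This is entirely self-contained within the Sarason kernel calculus and does not invoke the unitary equivalence $S_{\Theta}\cong S_{\Psi}^{*}$.

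Your approach is shorter and more conceptual: iterate the hypothesis and let the strong convergence $S_{\Theta}^{n}\to 0$ (Remark~\ref{99e776ewr}) kill $A$. The trade-off is that you import the fact that $S_{\Theta}$ is of class $C_{0\cdot}$, which the paper only records in a remark with an external citation, whereas the paper's kernel argument stays within tools already developed in the text. Your observation that the same estimate simultaneously yields Proposition~\ref{secondMT} is exactly right; indeed, the paper's own proof of Proposition~\ref{secondMT} is essentially your computation, so in effect you have noticed that Proposition~\ref{secondMT} already implies Proposition~\ref{lem:zero}, rendering the kernel argument unnecessary once Remark~\ref{99e776ewr} is in hand.
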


\begin{proof}
Recall that 
$$k_{\lambda}(z) = \frac{1 - \overline{\Theta(\lambda)} \Theta(z)}{1 - \overline{\lambda} z}$$ is the kernel function for $\K_{\Theta}$. There is also the ``conjugate kernel'' 
$$\widetilde{k}_{\lambda}(z) = \frac{\Theta(z) - \Theta(\lambda)}{z - \lambda}$$ which also belongs to $\K_{\Theta}$ \cite[p.~495]{Sarason}. 
The proof depends on the following kernel function identities from \cite[p.~496]{Sarason}: 
$$S_{\Theta} \widetilde{k}_{\lambda} = \lambda \widetilde{k}_{\lambda} - \Theta(\lambda) k_{0},$$
$$S_{\Theta} k_{\lambda} = \frac{1}{\overline{\lambda}} k_{\lambda} - \frac{1}{\overline{\lambda}} k_0.$$
This gives us 
\begin{align*}
(A \widetilde{k}_{\lambda})(z) & = \langle S_{\Theta}^{*} A S_{\Theta} \widetilde{k}_{\lambda}, k_{z}\rangle\\
& = \langle A S_{\Theta} \widetilde{k}_{\lambda}, S_{\Theta} k_z\rangle\\
& = \langle A (\lambda \widetilde{k}_{\lambda} - \Theta(\lambda) k_{0}), \frac{1}{\overline{z}} k_{z} - \frac{1}{\overline{z}} k_0\rangle\\
& = \frac{\lambda}{z} (A \widetilde{k}_{\lambda})(z) - \frac{\Theta(\lambda)}{z} (A k_0)(z) - \frac{\lambda}{z} (A \widetilde{k}_{\lambda})(0) + \frac{\Theta(\lambda)}{z} (A k_{0})(0).
\end{align*}
Re-arrange the above identity:
$$(A \widetilde{k}_{\lambda})(z) (1 - \frac{\lambda}{z}) = -\frac{\Theta(\lambda)}{z} (A k_0)(z) - \frac{\lambda}{z} (A \widetilde{k}_{\lambda})(0) + \frac{\Theta(\lambda)}{z} (A k_0)(0).$$
Multiply through by $z$:
$$(z - \lambda) (A \widetilde{k}_{\lambda})(z) = -\Theta(\lambda) (A k_0)(z) - \lambda (A \widetilde{k}_{\lambda})(0) + \Theta(\lambda) (A k_0)(0).$$
Divide by $(z - \lambda)$ and re-arrange: 
\begin{equation}\label{2049tu}
(A \widetilde{k}_{\lambda})(z) = - \Theta(\lambda) \left( \frac{(A k_0)(z) - (A k_0)(\lambda)}{z - \lambda}\right) - \lambda \frac{(A \widetilde{k}_{\lambda})(0)}{z - \lambda}.
\end{equation}
Observe that the functions 
$$(A \widetilde{k}_{\lambda})(z) \; \; \mbox{and} \; \; \frac{(A k_0)(z) - (A k_0)(\lambda)}{z - \lambda}$$
belong to $\K_{\Theta}$ for all $\lambda \in \D$. This means that 
$$\lambda \frac{(A \widetilde{k}_{\lambda})(0)}{z - \lambda}$$
must  also belong to $\K_{\Theta}$ for all $\lambda \in \D$ which means (since there is an obvious pole at $z = \lambda$) that 
\begin{equation}\label{csdnjf0}
(A  \widetilde{k}_{\lambda})(0) = 0.
\end{equation}

The identity in \eqref{2049tu} can now be written as 
\begin{equation}\label{0-9dsn66}
(A \widetilde{k}_{\lambda})(z) = - \Theta(\lambda) \left( \frac{(A k_0)(z) - (A k_0)(\lambda)}{z - \lambda}\right).
\end{equation}
Plug in $z = 0$ into the previous identity and use \eqref{csdnjf0} to see that 
$$0 = (A \widetilde{k}_{\lambda})(0) = \frac{\Theta(\lambda)}{\lambda} ((A k_0)(0) - (A k_0)(\lambda)), \quad \lambda \in \D.$$
Since $\Theta$ is not the zero function, we get 
\begin{equation}\label{0o-asfhg0-}
(A k_0)(\lambda) = (A k_0)(0), \quad  \lambda \in \D.
\end{equation}
Plus this into \eqref{0-9dsn66} to get that 
$$A \widetilde{k}_{\lambda} = 0 \quad \forall \lambda \in \D.$$
But since the linear span of these conjugate kernels form a dense subset in $\K_{\Theta}$ (the conjugation operator $f \mapsto \widetilde{f}$ is isometric and involutive \cite[p.~495]{Sarason}), we see that $A$ must be the zero operator. 
\end{proof}

\begin{proof}[Proof of Proposition~\ref{secondMT}]
	
	For any $f, g, \in \K_{\Theta}$  and $n \geqslant 0$ we have 
	\begin{equation}\label{999sss}
	|\langle S^{* n} AS^{n} f, g\rangle| = |\langle S^{n}_{\Theta} f, A^*S^{n}_{\Theta} g\rangle| \leqslant\|S^{n}_{\Theta} f\| \|A^* S^{n}_{\Theta} g\|.
	\end{equation}
	Taking the supremum in (\ref{999sss}) over $g\in \K_\Theta$ with $\|g\|\leqslant 1$, and using the fact that $\|S_{\Theta}\| \leqslant 1$,  we get 
	\begin{equation}\label{999}
	\| S^{* n} A S^{n} f\| \leqslant  \| S^{n}_{\Theta} f\| \|A^{*}\|.
	\end{equation}
	  From Remark \ref{99e776ewr}, we conclude that the right hand side of \eqref{999sss} goes to zero as $n \to \infty$. Thus $S^{*n}_{\Theta} A S^{n}_{\Theta} \to 0$ strongly.	
\end{proof}



\bibliographystyle{plain}
\def\cprime{$'$}


\end{document}